\newcommand{\p}{\partial}
\newcommand{\C}{\mathbb C}
\newcommand{\R}{\mathbb R}
\newcommand{\im}{\operatorname{im}}
\renewcommand{\phi}{\varphi}
\newcommand{\ind}{\operatorname{ind}}
\newcommand{\Hom}{\operatorname{Hom}}
\newcommand{\Ext}{\operatorname{Ext}}
\newcommand{\sign}{\operatorname{sign}}
\newtheorem{theorem}{Theorem}[section]
\newtheorem{lemma}[theorem]{Lemma}
\newtheorem{proposition}[theorem]{Proposition}
\theoremstyle{definition}
\newtheorem{remark}[theorem]{Remark}
\newtheorem{example}[theorem]{Example}
\title{Index theory of the de Rham complex on manifolds with periodic ends}
\thanks{The first author was partially supported by NSF Grant 0805841, the second author was partially supported by NSF Grant 1105234, and the third author was partially supported by NSF Grant 1065905}
\author[Tomasz Mrowka]{Tomasz Mrowka}
\address{Department of Mathematics\newline\indent Massachusetts Institute of 
Technology \newline\indent Cambridge, MA 02139}
\email{\rm{mrowka@mit.edu}}
\author[Daniel Ruberman]{Daniel Ruberman}
\address{Department of Mathematics, MS 050\newline\indent Brandeis
University \newline\indent Waltham, MA 02454}
\email{\rm{ruberman@brandeis.edu}}
\author[Nikolai Saveliev]{Nikolai Saveliev}
\address{Department of Mathematics\newline\indent
University of Miami, PO Box 249085
\newline\indent Coral Gables, FL 33124}
\email{\rm{saveliev@math.miami.edu}}
\begin{document}
\begin{abstract}
We study the de Rham complex on a smooth manifold with a periodic end modeled on an infinite cyclic cover $\tilde X \to X$. The completion of this complex in exponentially weighted $L^2$-norms is Fredholm for all but finitely many exceptional weights determined by the eigenvalues of the covering translation map $H_*(\tilde X) \to H_*(\tilde X)$. We calculate the index of this weighted de Rham complex for all weights away from the exceptional ones.
\end{abstract}
\maketitle

\section{Introduction}
Let $M$ a smooth closed orientable manifold of dimension $n$. The de Rham complex of complex valued differential forms on $M$,
\[
\begin{CD}
0 \to \Omega^0 (M) @>d_0 >> \Omega^1 (M) @>d_1 >> \Omega^2 (M) \to\; \cdots\; \to \Omega^n (M) \to 0,
\end{CD}
\]
is known to be Fredholm in a suitable $L^2$ completion. This means as usual that the images of $d_k$ are closed and the vector spaces $\ker d_k/\im d_{k-1}$ are finite dimensional. The alternating sum of the dimensions of these spaces is called the index of the de Rham complex. Since $\ker d_k/\im d_{k-1}$ is isomorphic to the singular cohomology $H^k (M;\C)$ by the de Rham theorem, the above index equals $\chi (M)$, the Euler characteristic of $M$.

This paper extends these classical results to certain non-compact manifolds, those with a periodic end modeled on an infinite cyclic cover $\tilde X$ of a closed mani\-fold $X$. It builds on the earlier work of Miller \cite{miller} and Taubes \cite{taubes:periodic} and can be viewed as a continuation of our research in \cite{mrs1} and \cite{mrs2} on the index theory of elliptic operators on such manifolds.

Let $M$ be a Riemannian manifold with periodic end modeled on $\tilde X$. Its de Rham complex can be completed in the $L^2$ norm using (on the end) a Riemannian measure $dx$ lifted from that on $X$. This completion is, however, not Fredholm; see Remark \ref{R:fred}. To rectify this problem, we will use $L^2_{\delta}$ norms, which are the $L^2$ norms on $M$ with respect to the measure $e^{\delta f(x)}\,dx$ over the end. Here, $\delta$ is a real number and $f: \tilde X \to \R$ is a smooth function such that $f(\tau(x)) = f(x) + 1$ with respect to the covering translation $\tau: \tilde X \to \tilde X$. The $L^2_{\delta}$ completion of the de Rham complex on $M$ will be denoted by $\Omega^*_{\delta} (M)$.

\begin{theorem}\label{T:one}
Let $M$ be a smooth Riemannian manifold with a periodic end modeled on $\tilde X$, and suppose that $H_* (M;\C)$ is finite dimensional. Then $\Omega^*_{\delta} (M)$ is Fredholm for all but finitely many $\delta$ of the form $\delta = \ln|\lambda|$, where $\lambda$ is a root of the characteristic polynomial of $\tau_*: H_* (\tilde X;\C) \to H_* (\tilde X;\C)$.
\end{theorem}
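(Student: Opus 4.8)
The plan is to reduce the Fredholm property of the complex to the invertibility of a single elliptic operator on $M$ whose coefficients are periodic on the end, and then to study that operator by a Fourier--Laplace transform that replaces it with a holomorphic family of twisted de Rham complexes on the closed manifold $X$. The first move is to roll up the complex. Give each $\Omega^k_\delta(M)$ its $L^2_\delta$ inner product, let $d^*_\delta$ denote the formal adjoint of $d$ in this inner product, and set
\[
D_\delta \;=\; d + d^*_\delta \colon \Omega^{\mathrm{even}}_\delta(M) \longrightarrow \Omega^{\mathrm{odd}}_\delta(M).
\]
By the standard theory of Hilbert complexes, $\Omega^*_\delta(M)$ is Fredholm if and only if $D_\delta$ is Fredholm, equivalently if and only if the weighted Hodge Laplacian $\Delta_\delta = d\,d^*_\delta + d^*_\delta\,d$ is Fredholm. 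Conjugating by the natural isometry between $L^2_\delta$ and the unweighted $L^2$ turns $d$ on the end into a $\tau$-invariant operator of the form $d + \delta\,\eta\wedge$, where $\eta$ is the closed $1$-form on $X$ descended from $df$ (note that $\tau^*df = df$). Thus $D_\delta$ becomes an elliptic operator on $M$ whose coefficients are invariant under the covering translation $\tau$ over the end, i.e. a periodic operator in the sense of Taubes \cite{taubes:periodic}.

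The next step invokes the periodic-end Fredholm theory of \cite{taubes:periodic} and \cite{mrs1,mrs2}: under the hypothesis that $H_*(M;\C)$ is finite dimensional, $D_\delta$ is Fredholm on $M$ if and only if the associated translation-invariant model operator on the full cyclic cover $\tilde X$ is invertible in $L^2_\delta(\tilde X)$. To test this I would apply the Fourier--Laplace transform
\[
u \longmapsto \hat u(z) = \sum_{n\in\Z} z^{-n}\,(\tau^n)^* u,
\]
which for $u \in L^2_\delta$ is defined on the circle $|z| = e^{\delta}$ (in the normalization of the weight fixed above) and which diagonalizes every $\tau$-invariant operator. Since $\hat u(z)$ satisfies $\tau^*\hat u(z) = z\,\hat u(z)$, it descends to a form on $X$ with values in the flat line bundle $\C_z$ whose holonomy generator acts by $z$, and the transform carries the model operator to the twisted de Rham operator $d_z + d_z^*$ on $X$. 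On the closed manifold $X$ this operator is elliptic and self-adjoint, so $\ker(d_z + d_z^*) \cong H^*(X;\C_z)$ and it is invertible precisely when the twisted complex $(\Omega^*(X;\C_z),d_z)$ is acyclic, that is when $H^*(X;\C_z) = 0$. Hence the model is invertible in $L^2_\delta(\tilde X)$ if and only if $H^*(X;\C_z) = 0$ for every $z$ with $|z| = e^{\delta}$.

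It remains to identify, in topological terms, the set of $z$ for which the twisted cohomology fails to vanish; over a field this is the same as the vanishing of $H_*(X;\C_z)$. Viewing $H_*(\tilde X;\C)$ as a module over $\C[t,t^{-1}]$ with $t$ acting by $\tau_*$ and applying $H_*(X;-)$ to the short exact sequence $0 \to \C[t,t^{-1}] \xrightarrow{t-z} \C[t,t^{-1}] \to \C_z \to 0$, one obtains the Milnor--Wang long exact sequence
\[
\cdots \to H_k(\tilde X;\C) \xrightarrow{\ \tau_* - z\ } H_k(\tilde X;\C) \to H_k(X;\C_z) \to H_{k-1}(\tilde X;\C) \xrightarrow{\ \tau_* - z\ } \cdots,
\]
from which $H_*(X;\C_z) = 0$ in all degrees if and only if $\tau_* - z$ is invertible on $H_*(\tilde X;\C)$, that is if and only if $z$ is not a root of the characteristic polynomial of $\tau_*$. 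Therefore $D_\delta$ fails to be Fredholm exactly when the circle $|z| = e^{\delta}$ meets a root $\lambda$, i.e. when $\delta = \ln|\lambda|$; since $\tau_*$ is invertible its characteristic polynomial has finitely many nonzero roots, so these account for only finitely many weights, and for every other $\delta$ the operator $D_\delta$, and hence the complex $\Omega^*_\delta(M)$, is Fredholm.

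I expect the main obstacle to be the reduction carried out in the second paragraph rather than the algebra of the third. Unlike a cylindrical end, where the model operator has the form $\partial_t + A$ with $A$ self-adjoint and the excluded weights are read directly off the spectrum of $A$, a periodic end admits no such normal form; proving that Fredholmness of $D_\delta$ is equivalent to invertibility of the Fourier--Laplace family on the circle $|z| = e^{\delta}$ requires the parametrix and excision techniques for periodic operators developed in \cite{taubes:periodic} and \cite{mrs1,mrs2}, and the passage from this to ``Fredholm for all but finitely many $\delta$'' rests on the holomorphy of the family $z \mapsto d_z + d_z^*$ together with the finiteness of the spectrum of $\tau_*$.
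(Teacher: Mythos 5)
Your proposal is correct and follows the same overall strategy as the paper: reduce via Taubes's end-periodic theory to the translation-invariant model on $\tilde X$, apply the Fourier--Laplace transform to obtain a holomorphic family of twisted de Rham complexes on $X$, and then identify the circles $|z|=e^{\delta}$ on which the twisted cohomology $H^*_z(X;\C)$ fails to vanish. The one genuine divergence is in the final algebraic step: the paper computes $H^*_z(X;\C)$ via the universal coefficient theorem applied to the primary decomposition of $H_*(\tilde X;\C)$ as a torsion $\C[t,t^{-1}]$-module, and then quotes Milnor's identification of the order ideal with the characteristic polynomial of $\tau_*$; you instead use the Milnor--Wang long exact sequence, which gives the same conclusion (vanishing of $H_*(X;\C_z)$ in all degrees iff $\tau_*-z$ is invertible iff $z$ is not an eigenvalue of $\tau_*$) somewhat more directly. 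Both routes are fine; the paper's version has the advantage that the same $\Hom$/$\Ext$ computation is reused verbatim in Section \ref{S:index} with $\ell^2_{\delta_1\delta_2}$ coefficients, which is why it is set up that way. One small step you elide: your argument needs $H_*(\tilde X;\C)$ to be finite dimensional for ``characteristic polynomial of $\tau_*$'' to make sense and for invertibility of $\tau_*-z$ to be detected by eigenvalues, whereas the stated hypothesis is finite dimensionality of $H_*(M;\C)$; you should note (as the paper does, by Mayer--Vietoris applied to $M=Z\cup\tilde X_+$ and to the periodic pieces of $\tilde X$) that these two finiteness conditions are equivalent. Also, wrapping the complex up into $d+d^*_\delta$ is harmless but unnecessary, since Taubes's Lemma 4.3 applies to the elliptic complex directly.
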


Conditions on $X$ that guarantee that $H_* (M;\C)$ is finite dimensional can be found in Section \ref{S:fred}, together with a proof of Theorem \ref{T:one}. 

Given a manifold $M$ as in the above theorem, the complex $\Omega^*_{\delta} (M)$ has a well defined index $\ind_{\delta} (M)$. Miller \cite{miller} showed that $\ind_{\delta} (M)$ is an even or odd function of $\delta$ according to whether $\dim M = n$ is even or odd, and that $\ind_{\delta} (M) = (-1)^n\,\chi (M)$ for sufficiently large $\delta > 0$. We add to this knowledge the following result.

\begin{theorem}\label{T:two} 
Let $M$ be as in Theorem \ref{T:one}. Then $\ind_{\delta} (M)$ is a piecewise constant function of $\delta$ whose only jumps occur at $\delta = \ln |\lambda|$, where $\lambda$ a root of the characteristic polynomial $A_k(t)$ of $\tau_*: H_k (\tilde X;\C) \to H_k (\tilde X;\C)$ for some $k = 0,\ldots,n-1$. Every such $\lambda$ contributes $(-1)^{k+1}$ times its multiplicity as the root of $A_k(t)$ to the jump.
\end{theorem}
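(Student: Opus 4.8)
The plan is to separate the two assertions: that $\ind_\delta(M)$ is piecewise constant, and that the jump at each wall is the prescribed alternating sum of eigenvalue multiplicities.

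\medskip

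\emph{Piecewise constancy.} By Theorem~\ref{T:one} the complex $\Omega^*_\delta(M)$ is Fredholm for every $\delta$ outside the finite exceptional set $E = \{\ln|\lambda|\}$. Conjugating the differentials by $e^{\delta f/2}$ replaces each $d_k$ by $d_k + \tfrac{\delta}{2}\,(df)\wedge$, so the family depends analytically on $\delta$; since the index of a Fredholm complex is invariant under continuous deformation through Fredholm complexes, $\ind_\delta(M)$ is constant on each component of $\R\setminus E$. This gives piecewise constancy with jumps confined to $E$, and it remains to evaluate the jump at a fixed wall $\delta_0 = \ln|\lambda_0|$.

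\medskip

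\emph{Reduction to a quotient complex supported at infinity.} Fix $\ep>0$ so that $[\delta_0-\ep,\delta_0+\ep]$ meets $E$ only at $\delta_0$, and set $\delta_\pm = \delta_0\pm\ep$. A larger weight imposes faster decay on the end (there $f\to+\infty$), so $\Omega^*_{\delta_+}(M)\hookrightarrow\Omega^*_{\delta_-}(M)$; let $Q^*$ be the quotient complex. The long exact sequence of the associated short exact sequence of complexes gives $\ind_{\delta_-}(M) = \ind_{\delta_+}(M) + \ind(Q)$, whence
\[
\ind_{\delta_+}(M) - \ind_{\delta_-}(M) = -\ind(Q) = -\sum_k (-1)^k \dim H^k(Q).
\]
Because the two weights differ only in the decay they impose over the periodic end, $Q^*$ is supported at infinity: its cohomology depends only on the end modeled on $\tilde X$ and records exactly the forms of critical growth $|\lambda_0|^{f}$ that are squeezed into the window opened by crossing $\delta_0$.

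\medskip

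\emph{Identification of the jump.} To compute $H^*(Q)$ I would pass to the Fourier--Laplace transform of Taubes~\cite{taubes:periodic} used in \cite{mrs1}, converting the $\Z$-periodic de Rham complex on $\tilde X$ into a holomorphic family of twisted de Rham complexes on the base $X$ with coefficients in the flat line bundle $\C_z$ of monodromy $z\in\C^*$. The Fredholm weights $\delta\notin E$ are exactly those for which the circle $|z|=e^{\delta}$ avoids the support of the twisted cohomology, and the jump across $\delta_0$ is the alternating sum of the masses of $H^k(X;\C_z)$ concentrated at the points $z$ with $|z|=e^{\delta_0}$. By the dictionary between $H^k(X;\C_z)$ and the $\C[t,t^{-1}]$-module $H_k(\tilde X;\C)$ with $t$ acting as $\tau_*$, each such point is an eigenvalue of $\tau_*$ of modulus $e^{\delta_0}$ and contributes its generalized eigenspace, of dimension equal to the multiplicity $m_k(\lambda_0)$ of $\lambda_0$ as a root of $A_k(t)$. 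Substituting $\dim H^k(Q)=m_k(\lambda_0)$ into the display yields the jump $\sum_k(-1)^{k+1}m_k(\lambda_0)$; summing over all roots of a given modulus gives the theorem, and the range $k=0,\ldots,n-1$ reflects $H_n(\tilde X;\C)=0$ for the connected noncompact $n$-manifold $\tilde X$.

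\medskip

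\emph{Main obstacle.} The delicate point, which I expect to absorb most of the work, is the final identification: that the jump detects the full \emph{algebraic} multiplicity of $\lambda_0$ rather than the geometric eigenspace dimension. When $\tau_*$ has a nontrivial Jordan block at $\lambda_0$, the critical forms include polynomially modulated exponentials $f^{\,j}|\lambda_0|^{f}$, and one must show the count reproduces the entire generalized eigenspace. On the Fourier--Laplace side this is the statement that the order of the pole of the transformed resolvent at $z=\lambda_0$ equals the algebraic multiplicity; establishing this, together with the degree bookkeeping that produces the sign $(-1)^{k+1}$ (the extra $+1$ coming from the connecting homomorphism of the short exact sequence of complexes), is the crux of the argument.
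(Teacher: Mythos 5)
Your overall strategy---compare the index on the two sides of a wall $\ln|\lambda_0|$ and identify the difference with the generalized eigenspaces of $\tau_*$ of modulus $|\lambda_0|$---is the same as the paper's, and your sign bookkeeping comes out right. But the central reduction step has a genuine gap. You propose to use the inclusion $\Omega^*_{\delta_+}(M)\hookrightarrow\Omega^*_{\delta_-}(M)$ and the long exact sequence of the quotient complex $Q^*$. The problem is that compactly supported smooth forms are dense in both weighted completions, so $\Omega^*_{\delta_+}(M)$ is a \emph{dense}, non-closed subspace of $\Omega^*_{\delta_-}(M)$: the topological quotient is zero and the algebraic quotient is a non-Hausdorff space on which ``$\ind(Q)$'' and ``$\dim H^k(Q)$'' are not defined. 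There is no Fredholm quotient complex ``supported at infinity'' in this sense, so the identity $\ind_{\delta_-}(M)=\ind_{\delta_+}(M)+\ind(Q)$ does not get off the ground. The paper replaces this step by an excision principle (Proposition \ref{P:change}): it introduces the complex $\Omega^*_{\delta_1\delta_2}(\tilde X)$ on the full infinite cyclic cover with weight $\delta_1$ on the negative end and $\delta_2$ on the positive end, and shows $\ind_{\delta_2}(M)-\ind_{\delta_1}(M)=\ind_{\delta_1\delta_2}(\tilde X)$ by cutting and regluing, using that $\ind_{\delta_1\delta_1}(\tilde X)=0$. This mixed-weight complex is honestly Fredholm and serves as the correct substitute for your $Q^*$.

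You also flag, but do not resolve, the point you correctly identify as the crux: that the jump sees the full algebraic multiplicity $m_k(\lambda_0)$ rather than the geometric eigenspace dimension. The paper settles this concretely: Miller's weighted de Rham theorem converts $\Omega^*_{\delta_1\delta_2}(\tilde X)$ into cellular cohomology with coefficients in the weighted sequence space $\ell^2_{\delta_1\delta_2}$; the universal coefficient theorem over $\C[t,t^{-1}]$ reduces everything to $\Hom$ and $\Ext$ of the cyclic summands $\C[t,t^{-1}]/(t-\lambda)^m$ into $\ell^2_{\delta_1\delta_2}$; the $\Ext$ terms vanish because $t-\lambda$ is surjective, and $\Hom$ is the kernel of $(t-\lambda)^m$ on $\ell^2_{\delta_1\delta_2}$, which is computed directly to have dimension exactly $m$ when $e^{\delta_2}<|\lambda|<e^{\delta_1}$ and $0$ otherwise. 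That explicit kernel computation (polynomially modulated geometric sequences, precisely the Jordan-block phenomenon you anticipate) is what delivers the algebraic multiplicity, and it is the piece your proposal would still need to supply, in addition to repairing the quotient-complex step.
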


Together with the results of \cite{miller} this completes the calculation of the function $\ind_{\delta} (M)$. Theorem \ref{T:two} is proved in Section \ref{S:index}. The last section of the paper contains discussion as well as calculations of $\ind_{\delta} (M)$ for two important classes of examples. The first class consists of manifolds with infinite cylindrical ends studied earlier by Atiyah, Patodi and Singer \cite{aps:I}, and the second of manifolds arising in the study of knotted $S^2 \subset S^4$.

Finally, we will remark that the de Rham complex is a special case of the more general concept of an elliptic complex. The index theory for elliptic complexes on closed manifolds was developed by Atiyah and Singer whose famous index theorem \cite{atiyah-singer:I} expresses the index of such a complex in purely topological terms. More generally, Atiyah, Patodi and Singer \cite{aps:I} computed the index for certain elliptic operators (that is, elliptic complexes of length two) on manifolds with cylindrical ends. In our paper~\cite{mrs2} we extended their result to general manifolds with periodic ends; our index formula involves a new end-periodic $\eta$-invariant, generalizing the $\eta$-invariant of Atiyah, Patodi and Singer from the cylindrical setting. It would be interesting to compare the index formula of Theorem \ref{T:two} with that of \cite{mrs2} for the operator $d + d^*$ obtained by wrapping up the de Rham complex. 

\medskip\noindent
\textbf{Acknowledgments:} We thank Dan Burghelea and Andrei Pajitnov for  enlightening discussions on this material.

%%%%%%%%%%%%%%%%%%%%%%%%%%%%%%%%%%%%%%%%%%%%%%%%

\section{The Fredholm property}\label{S:fred}
In this section, we will prove Theorem \ref{T:one} by reducing it to a statement about twisted cohomology of $X$. 

%%%%%%%%%%%%%%%%%%%%%%%%%%%%%%%%%%%%%%%%%%%%%%%%

\subsection{The Fourier--Laplace transform}
The de Rham complex of $M$ is an elliptic complex on the end-periodic manifold $M$ hence we can use the general theory of such complexes due to Taubes \cite{taubes:periodic}. According to that theory, it is sufficient to check the Fredholm property of the de Rham complex of $\tilde X$ completed in the $L^2$ norm on $\tilde X$ with respect to the measure $e^{\delta f(x)}\,dx$. The latter complex can be studied using the Fourier--Laplace transform, which is defined by the formula
\[
\hat\omega_z \;=\;\sum\;z^k\cdot (\tau^*)^k\;\omega,\quad z \in \C^*,
\]
on compactly supported forms $\omega$ on $\tilde X$, and is extended by continuity to $L^2$ forms. The summation in the above formula extends to all integer $k$, which makes the form $\hat\omega_z$ invariant with respect to $\tau^*$. The form $\hat\omega_z$ then defines a form on $X$ which is denoted by the same symbol. An application of the Fourier--Laplace transform to the de Rham complex on $\tilde X$ results in a family of twisted de Rham complexes 
\begin{equation}\label{E:twist}
\begin{CD}
\cdots @>>> \Omega^k (X) @> \quad d - \ln z\,df\quad >> \Omega^{k+1} (X) @>>> \ldots
\end{CD}
\end{equation}
parameterized by $z \in \C^*$. The following result is proved in Taubes \cite[Lemma 4.3]{taubes:periodic}.

\begin{proposition}\label{P:taubes}
Let $M$ be a smooth Riemannian manifold with  a periodic end modeled on $\tilde X$.  For any given $\delta$, the complex $\Omega^*_{\delta} (M)$ is Fredholm if and only if the complexes \eqref{E:twist} are exact for all $z$ such that $|z| = e^{\delta}$.
\end{proposition}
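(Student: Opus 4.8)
The plan is to establish the two equivalences that the statement packages together: first, the reduction of the Fredholm property of $\Omega^*_{\delta}(M)$ to the invertibility of the translation-invariant de Rham complex on the full cover $\tilde X$, completed in the $L^2$ norm with weight $e^{\delta f}$; and second, the identification of that invertibility with the exactness of the twisted complexes \eqref{E:twist} over the circle $|z| = e^{\delta}$. The first reduction is the general excision principle for elliptic complexes on end-periodic manifolds: $M$ and the model half-end built from $\tilde X$ agree outside a compact set, so a parametrix patched from an interior parametrix and a model-end parametrix shows that $\Omega^*_{\delta}(M)$ is Fredholm precisely when the model operator on the end, equivalently the weighted complex on $\tilde X$, is invertible. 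I would quote this from Taubes' theory and concentrate the work on the second step.

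For the second step I would analyze the weighted complex on $\tilde X$ through the Fourier--Laplace transform. A direct computation shows that $\omega \mapsto z^f\,\hat\omega_z$ carries the differential $d$ on $\tilde X$ to the twisted differential $d - \ln z\,df$ on $X$: since $d$ commutes with $\tau^*$ one has $\widehat{d\omega}_z = d\,\hat\omega_z$, and conjugating by the $\tau^*$-invariant factor $z^f = e^{(\ln z) f}$ (which satisfies $\tau^* z^f = z\cdot z^f$) produces exactly the zeroth-order correction, giving $d - \ln z\,df$. The Plancherel theorem for Fourier series then makes this transform a unitary isomorphism, up to normalization, between the $L^2_{\delta}$ completion of $\Omega^*(\tilde X)$ and the Hilbert space of $L^2$ sections over the circle $C_{\delta} = \{\,|z| = e^{\delta}\,\}$ of the family of twisted complexes \eqref{E:twist}. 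Under this isomorphism the weighted de Rham complex on $\tilde X$ becomes the direct integral of the complexes \eqref{E:twist}.

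It then remains to match invertibility of this direct integral with fiberwise exactness. For each $z \in C_{\delta}$ the operator $d_z = d - \ln z\,df$ differs from $d$ by a zeroth-order term, so the twisted Hodge Laplacian $\Delta_z = d_z d_z^* + d_z^* d_z$ is elliptic, self-adjoint, and depends continuously on $z$; the complex \eqref{E:twist} is exact at $z$ if and only if $\Delta_z$ is invertible. If every $\Delta_z$ is invertible, then continuity together with compactness of $C_{\delta}$ bounds $\|\Delta_z^{-1}\|$ uniformly, so the direct integral of the $\Delta_z$ is invertible, whence the weighted complex on $\tilde X$ has closed differentials and trivial cohomology. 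Conversely, if $\Delta_{z_0}$ is non-invertible for some $z_0 \in C_{\delta}$, I would produce a Weyl sequence for the direct integral by tensoring a unit kernel vector of $\Delta_{z_0}$ on $X$ with $L^2$ bumps concentrating at $z_0$ in the spectral parameter: these have norm one, converge weakly to zero, and are sent to arbitrarily small vectors by continuity of $z \mapsto \Delta_z$, exactly as a continuous multiplication operator vanishing at a point fails to have closed range. This shows the complex on $\tilde X$ is not invertible, and combined with the reduction gives the proposition.

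The main obstacle is this last, converse direction: converting the failure of exactness at a single value $z_0$ into a genuine breakdown of the Fredholm property of the full complex. The point is that Fredholmness of a direct integral over the connected parameter space $C_{\delta}$ forces \emph{uniform} invertibility of the fibers, so the non-closed-range phenomenon cannot be repaired by a finite-dimensional correction. Making this rigorous requires that the Fourier--Laplace transform respects the $L^2_{\delta}$ norms, via the Plancherel identity, and that the elliptic estimates for $\Delta_z$ are uniform in $z$ (which holds because all the $\Delta_z$ share the symbol of the ordinary Laplacian), so that the concentrated Weyl sequences are genuinely almost null for the transformed operator. A minor additional check is that \emph{Fredholm} and \emph{invertible} coincide for the two-ended cover $\tilde X$, there being no compact core to contribute a cokernel.
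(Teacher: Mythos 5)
The paper does not prove this proposition at all---it simply cites Taubes \cite[Lemma 4.3]{taubes:periodic}---and your sketch is a correct reconstruction of exactly the argument behind that citation: parametrix patching to reduce to the translation-invariant model on $\tilde X$, the Fourier--Laplace transform (correctly conjugated by $z^f$) as a unitary onto a direct integral over the circle $|z|=e^{\delta}$, fiberwise Hodge theory identifying exactness of \eqref{E:twist} with invertibility of $\Delta_z$, and uniform invertibility versus Weyl sequences for the two directions. The only point left implicit is that in the converse direction the concentrated Weyl sequence built on $\tilde X$ must still be translated far into the end and cut off so that it lives on $M$ itself (this is where translation invariance of the model is used a second time); with that standard step supplied, the argument is complete.
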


The cohomology of complex \eqref{E:twist} is of course the twisted de Rham cohomology $H^*_z (X;\C)$ with coefficients in the complex line bundle with flat connection $-\ln z\,df$. 

\begin{proposition}\label{P:fred}
Let $M$ be a smooth Riemannian manifold with  a periodic end modeled on $\tilde X$. Then the following three conditions are equivalent\,:
\begin{enumerate}
\item $\Omega^*_{\delta} (M)$ is Fredholm for all $\delta \in \R$ away from a  discrete set;
\item $H^*_z (X;\C)$ vanishes for all $z\in \C^*$ away from a  discrete set;
\item $H^*_z (X;\C)$ vanishes for at least one $z \in \C^*$.
\end{enumerate}
\end{proposition}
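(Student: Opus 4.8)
The plan is to reduce all three conditions to a single dichotomy about the non-exactness locus
\[
S \;=\; \{\, z \in \C^* : H^*_z(X;\C) \neq 0 \,\},
\]
and then to read off the equivalences. By Proposition \ref{P:taubes}, for each $\delta$ the complex $\Omega^*_{\delta}(M)$ is Fredholm precisely when the circle $\{|z| = e^{\delta}\}$ is disjoint from $S$. Thus (1) asserts that $\{\ln|z| : z \in S\}$ is discrete in $\R$, (2) asserts that $S$ is discrete in $\C^*$, and (3) asserts that $S \neq \C^*$. The crux is the following dichotomy: \emph{$S$ is either all of $\C^*$ or a finite set}. Granting this, the three conditions become equivalent at once, since each is then equivalent to the single statement ``$S$ is finite'': a finite $S$ makes both $\{\ln|z|:z\in S\}$ and $S$ itself discrete, while $S=\C^*$ violates all three.

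To establish the dichotomy I would pass from analysis to topology. The complex \eqref{E:twist} is the de Rham complex of $X$ with values in the flat line bundle determined by the connection $d - \ln z\,df$, so by the de Rham theorem for flat bundles its cohomology $H^*_z(X;\C)$ is the cohomology of $X$ with coefficients in the rank-one local system whose monodromy along the loop dual to $[df]$ is $z$. This local system is pulled back from the representation $\pi_1(X) \to \Z \to \C^*$ that defines the infinite cyclic cover $\tilde X$, so its cohomology is computed by a finite complex $(V^\bullet, d^\bullet(z))$ of fixed finite-dimensional $\C$-vector spaces obtained by specializing at $t=z$ the free complex associated to $\tilde X$ over $\Lambda = \C[t,t^{-1}]$ ($t$ the deck transformation). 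The point is that the cellular chains of $\tilde X$ form a bounded complex of finitely generated free $\Lambda$-modules, so after fixing $\Lambda$-bases the differentials $d^k(z)$ become matrices with Laurent-polynomial entries in $z$.

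Now the dichotomy is elementary linear algebra in one variable. At each $z$ one has $H^*_z = 0$ iff $\rk d^k(z) + \rk d^{k-1}(z) = \dim V^k$ for every $k$. Let $r_k = \max_z \rk d^k(z)$ be the generic rank. Since $d^k d^{k-1} = 0$ forces $\rk d^k(z) + \rk d^{k-1}(z) \le \dim V^k$ pointwise, there are exactly two cases. If $r_{k-1} + r_k < \dim V^k$ for some $k$, then $H^k_z \neq 0$ for every $z$ and $S = \C^*$. Otherwise $r_{k-1}+r_k = \dim V^k$ for all $k$, and the exceptional set is contained in $\bigcup_k \{z : \rk d^k(z) < r_k\}$. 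The locus $\{z : \rk d^k(z) < r_k\}$ is the common zero set of all $r_k \times r_k$ minors of $d^k(z)$; since $r_k$ is attained these minors do not all vanish identically, so, being cut out of $\C^*$ by Laurent polynomials in one variable, this locus is finite. Hence $S$ is finite, completing the dichotomy, and the proposition follows formally: (3) forces $S \neq \C^*$, hence $S$ finite, hence (1) and (2); conversely each of (1) and (2) rules out $S = \C^*$ and so yields (3).

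The main obstacle is really the reduction of the middle paragraph—identifying the twisted de Rham cohomology with the cohomology of the finite free $\Lambda$-complex of $\tilde X$, and in particular checking that $H_*(\tilde X;\C)$ is finitely generated over $\Lambda$, which is what makes the differentials depend algebraically on $z$. Once this Laurent-polynomial dependence is secured the generic-rank argument is routine. I would note in passing that a purely analytic route runs through the analytic Fredholm theorem applied to the family \eqref{E:twist}, but the $L^2$-adjoint entering the associated Laplacian depends antiholomorphically on $z$, so the finite-dimensional model above is the cleaner device; I also expect the sharper identification of $S$ with the roots of the characteristic polynomials of $\tau_*$ required for Theorem \ref{T:one} to emerge from this same $\Lambda$-module description by computing the torsion of $H_*(\tilde X;\C)$.
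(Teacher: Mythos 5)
Your proof is correct, but it takes a different route from the paper's. The paper's proof is a two-line appeal to the general theory of holomorphic families of elliptic complexes: the complexes \eqref{E:twist} depend holomorphically on $z\in\C^*$, and for such a family exactness at a single point is equivalent to exactness away from a discrete set; combined with Proposition \ref{P:taubes} this gives all three equivalences at once. You instead bypass the analytic statement entirely by passing to the finite-dimensional algebraic model $\Hom_{\,\C[t,t^{-1}]}(C_*(\tilde X,\C),\C_z)$ --- exactly the identification the paper itself introduces in the following subsection via the twisted de Rham theorem --- and then run a generic-rank argument on matrices with Laurent-polynomial entries to get the dichotomy that the non-exactness locus $S$ is either all of $\C^*$ or finite. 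Your version is more elementary and self-contained (no need to know that upper semicontinuity of cohomology holds for holomorphic families of elliptic complexes, and it sidesteps the antiholomorphic dependence of the adjoint that you rightly flag), and it yields the sharper conclusion that $S$ is finite rather than merely discrete, which is in the spirit of what the paper later extracts from the universal coefficient theorem and the torsion decomposition \eqref{E:torsion}. The paper's route is shorter given the cited machinery and keeps the Fredholm discussion uniformly analytic before the topological reduction is introduced. One small imprecision on your side: what makes the differentials algebraic in $z$ is that the cellular chain complex of $\tilde X$ is a bounded complex of finitely generated \emph{free} $\C[t,t^{-1}]$-modules (immediate from $X$ being a finite cell complex), not finite generation of $H_*(\tilde X;\C)$ --- you state the correct fact in your argument, so the closing remark is just a slip of emphasis.
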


\begin{proof}
Observe that the complexes \eqref{E:twist} form a holomorphic family of elliptic complexes on $\C^*$; therefore, exactness of \eqref{E:twist} at one point is equivalent to exactness away from a discrete set.
The statement now follows from the preceding discussion. 
\end{proof}

\begin{remark}\label{R:fred}
The usual $L^2$ completion of the de Rham complex on $M$, that is, the complex $\Omega^*_0 (M)$, is not Fredholm because $H^0_z (X;\C)$ is not zero when $z = 1$.
\end{remark}

%%%%%%%%%%%%%%%%%%%%%%%%%%%%%%%%%%%%%%%%%%%%%%%%%

\subsection{Finite dimensionality}
Fix a finite cell complex structure on $X$, lift it to $\tilde X$, and consider the chain complex $C_* (\tilde X,\C)$ and its homology $H_* (\tilde X;\C)$. The group of integers acts on both by covering translations making them into finitely generated modules over $\C[t,t^{-1}]$. The twisted de Rham theorem tells us that the cohomology $H^*_z (X;\C)$ of complex \eqref{E:twist} is isomorphic to the cohomology of the complex $\Hom_{\,\C[t,t^{-1}]}\;(C_* (\tilde X,\C), \C_z)$, where $\C_z$ denotes a copy of $\C$ viewed as a $\C[t,t^{-1}]$ module with $p(t)$ acting via multiplication by $p(z)$. 

\begin{proposition}\label{P:finite}
Let $M$ be a smooth Riemannian manifold with  a periodic end modeled on $\tilde X$. Then the following two conditions are equivalent\,:
\begin{enumerate}
\item $H_*(M;\C)$ is a finite dimensional vector space;
\item $H^*_z (X;\C)$ vanishes for at least one $z \in \C^*$.
\end{enumerate}
\end{proposition}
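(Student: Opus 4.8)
The plan is to prove the equivalence by routing both conditions through the Alexander module $H_*(\tilde X;\C)$, regarded as a module over $\Lambda = \C[t,t^{-1}]$. Writing $M = M_0 \cup_Y Z_+$, where $M_0$ is compact and $Z_+ = \bigcup_{k\ge 0} W_k$ is the semi-infinite periodic end assembled from the translates $W_k = \tau^k W_0$ of a fundamental domain $W_0$ glued along copies of a hypersurface $Y$, I will show that conditions (1) and (2) are \emph{each} equivalent to the single algebraic statement that $H_*(\tilde X;\C)$ is a torsion $\Lambda$-module.

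First I would dispose of the implication (2) $\Leftrightarrow$ torsion, which is pure homological algebra over a PID and uses the identification $H^k_z(X;\C) \cong H^k(\Hom_{\Lambda}(C_*(\tilde X,\C),\C_z))$ recorded before the statement, where $\C_z = \Lambda/(t-z)$. Since $\Lambda$ is a PID and $C_*(\tilde X,\C)$ is a finite complex of free $\Lambda$-modules, the universal coefficient theorem yields
\[
0 \to \Ext^1_{\Lambda}(H_{k-1}(\tilde X;\C),\C_z) \to H^k_z(X;\C) \to \Hom_{\Lambda}(H_k(\tilde X;\C),\C_z) \to 0.
\]
Decomposing $H_k(\tilde X;\C) = \Lambda^{r_k}\oplus T_k$ with $T_k$ torsion, one computes $\Hom_{\Lambda}(\Lambda^{r_k},\C_z)=\C^{r_k}$ for \emph{every} $z$, whereas $\Hom_{\Lambda}(T_k,\C_z)$ and $\Ext^1_{\Lambda}(T_k,\C_z)$ are nonzero only when $z$ is one of the finitely many roots of the order of $T_k$, i.e. an eigenvalue of $\tau_*$. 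Hence $H^*_z(X;\C)$ can vanish for some $z$ precisely when every $r_k=0$, so that one may choose $z \in \C^*$ off the finite eigenvalue set; this is exactly the torsion condition (and recovers the equivalence with the conditions of Proposition \ref{P:fred}).

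Next I would establish (1) $\Leftrightarrow$ torsion by analyzing the end. Excision gives $H_*(M,M_0)\cong H_*(Z_+,Y)$, and the long exact sequences of the pairs $(M,M_0)$ and $(Z_+,Y)$ together with the finite dimensionality of $H_*(M_0)$ and $H_*(Y)$ reduce (1) to the finite dimensionality of $H_*(Z_+;\C)$. The covering translation writes $\tilde X$ as the increasing union $\bigcup_n \tau^{-n}Z_+$, and the inclusion $\tau Z_+ \hookrightarrow Z_+$ induces multiplication by $t$ on $H_*(Z_+;\C)$ after identifying $H_*(\tau Z_+;\C)\cong H_*(Z_+;\C)$ via $\tau_*$; passing to the direct limit gives the localization identity
\[
H_*(\tilde X;\C)\;\cong\; H_*(Z_+;\C)\otimes_{\C[t]}\Lambda \;=\; H_*(Z_+;\C)[t^{-1}].
\]
Granting that $H_*(Z_+;\C)$ is finitely generated over $\C[t]$, the structure theorem over the PID $\C[t]$ writes $H_k(Z_+;\C)=\C[t]^{r_k}\oplus(\text{torsion})$; the torsion part is finite dimensional over $\C$, while localization annihilates exactly its $t$-primary summands and carries $\C[t]^{r_k}$ to $\Lambda^{r_k}$. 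Thus the free rank $r_k$ matches the one above, $H_k(Z_+;\C)$ is finite dimensional if and only if $r_k=0$, and this holds for all $k$ if and only if $H_*(\tilde X;\C)$ is torsion. Chaining the three equivalences proves the proposition.

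The step I expect to be the main obstacle is the finite generation of $H_*(Z_+;\C)$ over $\C[t]$, since $Z_+$ is noncompact and its homology is an infinite direct limit. I would secure it by choosing a handle (or cellular) decomposition of the fundamental cobordism $W$ adapted to the product structure of the end, so that the attaching maps run in a single direction and $C_*(Z_+;\C)$ becomes a finite complex of free $\C[t]$-modules on the cells of one fundamental domain; finite generation of homology over the Noetherian ring $\C[t]$ then follows. Alternatively, one can sidestep the structure theorem by extracting from the pair sequence of $(Z_+,\tau Z_+)$ that both $\ker(t)$ and $\coker(t)$ acting on $H_*(Z_+;\C)$ are finite dimensional, and combining this with the finiteness of the localization $H_*(\tilde X;\C)$. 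The remaining ingredients — excision, the universal coefficient theorem over a PID, and the computation of $\Hom$ and $\Ext^1$ into $\C_z$ — are standard.
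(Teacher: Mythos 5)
Your proposal is correct, and its algebraic half coincides with the paper's: both reduce condition (2) to the statement that $H_*(\tilde X;\C)$ is a torsion module over $\C[t,t^{-1}]$ via the universal coefficient theorem over this PID, computing that $\Hom$ into $\C_z$ detects the free rank for every $z$ while the $\Hom$ and $\Ext$ of a cyclic torsion module vanish off the roots of its order. Where you diverge is in handling condition (1). The paper simply asserts, "by the Mayer--Vietoris principle," that $H_*(M;\C)$ is finite dimensional if and only if $H_*(\tilde X;\C)$ is, and then uses finite generation over $\C[t,t^{-1}]$ to identify finite dimensionality with the vanishing of the free rank $\ell$ in the primary decomposition. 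You instead pass through the half-infinite end $Z_+$, regard $H_*(Z_+;\C)$ as a finitely generated $\C[t]$-module, and use the direct-limit identity $H_*(\tilde X;\C)\cong H_*(Z_+;\C)\otimes_{\C[t]}\C[t,t^{-1}]$ (essentially Milnor's argument for infinite cyclic covers) to match free ranks. This costs you the extra step you correctly flag--arranging a cell structure so that $C_*(Z_+;\C)$ is a finite complex of free $\C[t]$-modules, which can be done by re-choosing lifts of cells so the boundary matrices have nonnegative powers of $t$--but it buys a genuine proof of the reduction that the paper leaves implicit: note that Mayer--Vietoris alone gives $H_*(M;\C)$ finite dimensional iff $H_*(\tilde X_+;\C)$ is, and relating the latter to $H_*(\tilde X;\C)$ in the direction you need really does require the localization (or an equivalent) argument. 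So your version is somewhat longer but fills a gap the paper glosses over; no step of it fails.
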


\begin{proof}
It is immediate from the Mayer--Vietoris principle that $H^*(M;\C)$ is finite dimensional if and only if $H^*(\tilde X;\C)$ is finite dimensional. Since $H_*(\tilde X;\C)$ is a finitely generated module over the principal ideal domain $\C[t,t^{-1}]$, it admits a primary decomposition 
\begin{equation}\label{E:prime}
H_*(\tilde X;\C)\,=\,\C[t,t^{-1}]^{\ell}\,\oplus\,\C[t,t^{-1}]/(p_1)\,\oplus\ldots\oplus\,\C[t,t^{-1}]/(p_m),
\end{equation}
therefore, $H^*(\tilde X;\C)$ is a finite dimensional vector space if and only if $\ell = 0$ in this decomposition. According to the universal coefficient theorem,
\[
H^*_z (X;\C)\;=\;\Hom_{\,\C[t,t^{-1}]}\;(H_* (\tilde X;\C),\C_z)\,\oplus\,\Ext_{\,\C[t,t^{-1}]}\;(H_* (\tilde X;\C),\C_z),
\]
hence vanishing of $H^*_z (X;\C)$ for at least one $z$ implies that $\ell = 0$. On the other hand, an easy calculation shows that 
\[
\Hom_{\,\C[t,t^{-1}]}\;(V,\C_z)\;=\;\Ext_{\,\C[t,t^{-1}]}\;(V,\C_z)\;=\;0
\] 
for any module $V = \C[t,t^{-1}]/(p)$ such that $p(z) \neq 0$. Therefore, $\ell = 0$ implies that $H^*_z (X;\C)$ must vanish for all $z$ away from the roots of the polynomials $p_1,\ldots,p_m$.
\end{proof}

%%%%%%%%%%%%%%%%%%%%%%%%%%%%%%%%%%%%%%%%%%%%%%%%%

\subsection{Proof of Theorem \ref{T:one}}
It follows from Propositions \ref{P:fred} and \ref{P:finite} that, if $H^* (\tilde X;\C)$ is finite dimensional, the complex $\Omega^*_{\delta} (M)$ is Fredholm for all $\delta$ away from a  discrete set. To finish the proof of Theorem \ref{T:one} we just need to identify this discrete set. According to Proposition \ref{P:taubes}, it consists of $\delta = \ln |z|$, where $z \in \C^*$ are the complex numbers for which $H^*_z (X;\C)$ fails to be zero. To find them, note that the free part in the prime decomposition \eqref{E:prime} vanishes making $H_*(\tilde X;\C)$ into a torsion module,
\begin{equation}\label{E:torsion}
H_*(\tilde X;\C)\,=\,\C[t,t^{-1}]/(p_1)\,\oplus\ldots\oplus\,\C[t,t^{-1}]/(p_m).
\end{equation}
According to Milnor \cite[Assertion 4]{milnor}, the order ideal $(p_1\cdots p_m)$ of this module is spanned by the characteristic polynomial of $\tau_*: H_* (\tilde X;\C) \to H_* (\tilde X;\C)$. The calculation with the universal coefficient theorem as in the proof of Proposition \ref{P:finite} now completes the proof.

%%%%%%%%%%%%%%%%%%%%%%%%%%%%%%%%%%%%%%%%%%%%%%%%%

\subsection{A sufficient condition}
Let $M$ be a smooth orientable manifold with a periodic end modeled on $\tilde X$. Vanishing of $\chi (X)$ is obviously a necessary condition for the vector space $H_* (M;\C)$ to be finite dimensional. To come up with a sufficient condition, observe that the derivative $df$ defines a closed 1-form on $X$, and let $\xi = [df] \in H^1 (X;\C)$ be its cohomology class. The cup product with $\xi$ gives rise to the chain complex
\begin{equation}\label{E:xi}
H^0 (X;\C) \xrightarrow{\;\cup\,\xi\;} H^1 (X;\C) \xrightarrow{\;\cup\,\xi\;}\;\ldots\;\xrightarrow{\;\cup\,\xi\;} H^n (X;\C).
\end{equation}

\begin{proposition}
Suppose the chain complex \eqref{E:xi} is exact. Then $H_*(M;\C)$ is a finite dimensional vector space for any smooth orientable manifold with periodic end modeled on $\tilde X$.
\end{proposition}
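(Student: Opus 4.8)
The plan is to deduce the statement from Proposition~\ref{P:finite}. Since exactness of \eqref{E:xi} is a condition on $X$ (and $\xi$) alone, it suffices to exhibit a single $z \in \C^*$ for which the twisted cohomology $H^*_z(X;\C)$ vanishes; finite dimensionality of $H_*(M;\C)$ for \emph{every} $M$ with the given end then follows at once. Writing $z = e^s$, the twisted complex \eqref{E:twist} has differential $d_s = d - s\,df$, and the cup product complex \eqref{E:xi} is exactly its first-order part at $s = 0$: the class $\xi = [df]$ acts on de Rham cohomology by cup product, and this is precisely the linearization of the deformation $d \mapsto d - s\,df$ in the parameter $s$. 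I therefore want to argue that, because this linearized complex is exact, the full deformed complex \eqref{E:twist} is exact for small $s \neq 0$.

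To make this precise I would pass to a finite-dimensional model. Fix a Riemannian metric on $X$ and use Hodge theory to identify $H^*(X;\C)$ with the harmonic forms, together with the associated contracting homotopy $h = d^* G$, where $G$ is the Green operator. Homological perturbation, applied to the perturbation $-s\,df\wedge$ of $d$, then transfers $d_s$ to a differential $D_s$ on the finite-dimensional complex $H^*(X;\C)$ without changing cohomology. The transferred differential is analytic in $s$ near $0$, vanishes to first order, and has the form $D_s = -s\,(\cup\,\xi) + O(s^2)$, its leading coefficient being exactly the cup product with $\xi$ that appears in \eqref{E:xi}.

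The conclusion is then soft. Dividing by the unit $s$, the complex $(H^*(X;\C),\, -D_s/s)$ has the same cohomology as \eqref{E:twist} and reduces at $s = 0$ to the cup product complex \eqref{E:xi}, which is exact by hypothesis. By upper semicontinuity of the dimension of cohomology for an analytic family of finite complexes, exactness is an open condition, so the complex remains exact for all sufficiently small $s \neq 0$; equivalently, $H^*_z(X;\C) = 0$ for all $z = e^s$ in a punctured neighborhood of $z = 1$. This produces the required $z$, and Proposition~\ref{P:finite} finishes the argument.

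The substance of the proof lies entirely in the second step. The main obstacle is to verify that the homological perturbation series converges for small $s$ and, above all, that its leading term is genuinely the cup product with $\xi$ and not some other operator; once this identification of the linearization with \eqref{E:xi} is in hand, the reduction to a finite-dimensional complex and the semicontinuity argument are routine. As an alternative to the analytic estimate, I could carry out the same computation formally over $\C[[s]]$ and invoke the standard fact that a bounded complex of finite free modules over a local ring is acyclic as soon as its reduction modulo the maximal ideal is; this avoids convergence estimates at the cost of one extra semicontinuity step to descend to an honest value of $z$.
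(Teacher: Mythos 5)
Your proposal is correct, and its overall strategy coincides with the paper's: both reduce to Proposition~\ref{P:finite} and verify vanishing of $H^*_z(X;\C)$ for all $z\neq 1$ in a small punctured neighborhood of $z=1$, using the fact that the cup-product complex \eqref{E:xi} is the linearization of the twisted differential $d - s\,df$ at $s=0$. Where you differ is in how that linearization is exploited. The paper simply cites Farber's spectral sequence with $E_1 = H^*(X;\C)$ and $d_1 = \cup\,\xi$ (higher differentials given by Massey products with $\xi$), converging to $H^*_z(X;\C)$; exactness of the $E_1$-page forces collapse to zero. You instead build the finite-dimensional model by hand via homological perturbation off the Hodge decomposition, identify the leading term of the transferred differential $D_s$ as $-s(\cup\,\xi)$, divide by $s$, and invoke upper semicontinuity of $\dim H^*$ for an analytic family of finite complexes. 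These are two faces of the same construction --- the transferred differential is precisely the small model from which Farber's Massey-product spectral sequence is extracted --- so your argument amounts to a self-contained proof of the special case of Farber's result that the paper quotes. What you gain is independence from the citation and an explicit convergence mechanism (the perturbation series converges for $|s|\,\|df\wedge\|\,\|d^*G\| < 1$, and ellipticity identifies the $L^2$ and smooth twisted cohomologies); what the paper's route gains is brevity and the sharper statement that exactness of \eqref{E:xi} is also \emph{necessary}, since it controls the whole spectral sequence rather than just the first-order term. Your formal $\C[[s]]$ variant with the Nakayama-type acyclicity criterion is also a legitimate way to sidestep the norm estimates.
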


This proposition can be derived as a special case of Taubes \cite[Theorem 3.1]{taubes:periodic}. That proof is analytic in nature; here is another proof which is purely topological.

According to Proposition \ref{P:finite}, it is sufficient to prove that the twisted cohomology $H^*_z (X;\C)$ vanishes for at least one $z \in \C^*$. We will show that it does so for all $z \neq 1$ in a sufficiently small neighborhood of 1. For such $z$, there is a spectral sequence $(E^*_r,d_r)$ which starts at $E^*_1 = H^*(X;\C)$, converges to $H^*_z (X;\C)$, and whose differentials are given by the Massey products with the class $\xi$; see Farber \cite[Section 10.9]{farber}. The convergence of this spectral sequence to zero is therefore a necessary and sufficient condition for the finite dimensionality of $H_*(M;\C)$. The chain complex \eqref{E:xi} is the term  $(E^*_1,d_1)$ of that spectral sequence, hence its exactness is sufficient for the finite dimensionality of $H^*(X;\C)$.

Note that the vanishing of the Euler characteristic of $X$ is not a sufficient condition for $H_*(M;\C)$ to be finite dimensional. An example is provided by the connected sum of $S^1 \times S^{n-1}$ with any manifold that is not a rational homology sphere but has Euler characteristic $2$.

%%%%%%%%%%%%%%%%%%%%%%%%%%%%%%%%%%%%%%%%%%%%%%%%%

\section{The index calculation}\label{S:index}
Let $M$  be a smooth Riemannian manifold with periodic end modeled on the infinite cyclic cover $\tilde X$, and assume that $H_* (M;\C)$ is finite dimensional. Let $\tau: \tilde X \to \tilde X$ be a covering translation, and denote by $A_k (t)$ the characteristic polynomial of $\tau_*: H_k(\tilde X;\C) \to H_k(\tilde X;\C)$ (the polynomial $A_1 (t)$ is traditionally referred to as the Alexander polynomial of the fundamental group of $X$). Denote by $\Delta$ the set of all $\delta$ of the form $\delta = \ln|\lambda|$, where $\lambda$ is a root of the product polynomial $A_0 (t)\cdot\ldots\cdot A_{n-1}(t)$. 

According to Theorem \ref{T:one}, the complex $\Omega^*_{\delta} (M)$ is Fredholm for all $\delta$ away from $\Delta$. Its index $\ind_{\delta}(M)$ is a piecewise constant function away from $\Delta$, where it may jump. We wish to calculate the size of these jumps.

%%%%%%%%%%%%%%%%%%%%%%%%%%%%%%%%%%%%%%%%%%%%%%%%%

\subsection{Excision principle}\label{S:excision}
Let $\delta_1$ and $\delta_2$ be two weights in $\R - \Delta$ and complete the de Rham complex of $\tilde X$ in the $L^2$ norm with respect to the measure $e^{\delta_1 f(x)}\,dx$ on the negative end of $\tilde X$, and $e^{\delta_2 f(x)}\,dx$ on the positive end. This complex will be denoted by $\Omega^*_{\delta_1\delta_2} (\tilde X)$. This is a Fredholm complex whose index will be denoted by $\ind_{\delta_1\delta_2}(\tilde X)$. 

\begin{proposition}\label{P:change}\quad
$\ind_{\delta_2}(M) - \ind_{\delta_1}(M)\;=\;\ind_{\delta_1\delta_2}(\tilde X)$.
\end{proposition}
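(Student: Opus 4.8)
The plan is to prove the equivalent additivity statement $\ind_{\delta_2}(M) = \ind_{\delta_1}(M) + \ind_{\delta_1\delta_2}(\tilde X)$ by a neck-stretching excision argument, after first reducing the indices of the de Rham complexes to indices of a single elliptic operator. First I would roll up each de Rham complex into its Euler operator $d + d^{*}$ from even to odd forms; since the index of an elliptic complex equals the index of its rolled-up operator, this replaces $\ind_{\delta}(M)$ and $\ind_{\delta_1\delta_2}(\tilde X)$ by operator indices without altering any of them. All three operators are Fredholm: for $M$ this is Theorem \ref{T:one}, since $\delta_1,\delta_2\notin\Delta$, while for $\tilde X$ with the two weights it follows from the same Fourier--Laplace criterion of Proposition \ref{P:taubes} applied separately to each end. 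I would then pass to the unweighted $L^2$ picture by conjugating with $e^{\delta f/2}$: the weighted operator on $M$ becomes the end-periodic operator $(d+d^{*}) - \tfrac{\delta}{2}\,c(df)$ on ordinary $L^2$, where $c(df)$ denotes exterior multiplication by $df$ together with its adjoint. In this picture the two weights live on the same Hilbert space and differ only by the zeroth-order, end-supported term $\tfrac{\delta_2-\delta_1}{2}\,c(df)$, which is exactly the feature that permits the index to jump.

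The heart of the argument is to exhibit $M_{\delta_2}$ (short for $M$ with end-weight $\delta_2$) as the result of gluing $M_{\delta_1}$ to $\tilde X_{\delta_1\delta_2}$ along a common periodic end. Writing $M = Z \cup_{\Sigma} \tilde X^{\ge 0}$ for a compact core $Z$ and the positive half $\tilde X^{\ge 0} = \{f\ge 0\}$, and $\tilde X = \tilde X^{\le 0}\cup_{\Sigma} \tilde X^{\ge 0}$, I observe that the positive end of $M_{\delta_1}$ and the negative end of $\tilde X_{\delta_1\delta_2}$ both carry weight $\delta_1$, are modeled on the same periodic manifold, and, in the conjugated picture, carry the identical translation-invariant operator $(d+d^{*}) - \tfrac{\delta_1}{2}\,c(df)$ (note $df$ is $\tau$-invariant, since $f(\tau x)=f(x)+1$). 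I would therefore truncate both manifolds deep in these matching ends and glue them along the resulting periodic neck; absorbing the neck into the end leaves precisely $Z$ together with $\tilde X^{\ge 0}$ now weighted by $\delta_2$, that is, $M_{\delta_2}$. The key structural point is that the asymptotic model operator on this neck---the Euler operator on the doubly infinite cover $\tilde X$ with uniform weight $\delta_1$---is invertible: exactness of the twisted complexes \eqref{E:twist} on $|z| = e^{\delta_1}$, which holds because $\delta_1\notin\Delta$, says precisely that the weighted $L^2$ cohomology of this periodic cylinder vanishes.

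With the neck model operator invertible, I would invoke the excision/gluing principle for indices of end-periodic elliptic operators in the form developed by Taubes \cite{taubes:periodic}, the periodic analog of the Atiyah--Patodi--Singer splitting \cite{aps:I}: gluing two Fredholm operators along matching ends whose common translation-invariant model is invertible produces a Fredholm operator whose index is the sum of the two indices, with no correction term. Applied to $M_{\delta_1}$ and $\tilde X_{\delta_1\delta_2}$ this gives $\ind_{\delta_2}(M) = \ind_{\delta_1}(M) + \ind_{\delta_1\delta_2}(\tilde X)$, which is the assertion. I expect the main obstacle to be the rigorous justification of this last step: one must patch parametrices across the stretched neck and control the error using exponential decay estimates on the periodic end, and it is exactly the invertibility of the uniform-weight-$\delta_1$ model operator---equivalently $\delta_1\notin\Delta$---that guarantees these estimates hold and that no residual index survives on the neck. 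The remaining work is bookkeeping with the identifications of the ends and with the conjugation.
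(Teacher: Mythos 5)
Your argument is correct and is essentially the paper's proof: the paper also splits $M = Z\cup\tilde X_+$ and $\tilde X = \tilde X_-\cup\tilde X_+$ along $Y=f^{-1}(c)$ and applies excision, phrased as the four-term swap identity $\ind_{\delta_1}(M)+\ind_{\delta_1\delta_2}(\tilde X)=\ind_{\delta_2}(M)+\ind_{\delta_1\delta_1}(\tilde X)$ together with the vanishing of $\ind_{\delta_1\delta_1}(\tilde X)$, which follows from exactness of the twisted complexes \eqref{E:twist} on $|z|=e^{\delta_1}$ --- exactly the invertibility of your uniform-weight-$\delta_1$ neck model. Your packaging as a direct gluing with invertible neck (after rolling up to $d+d^*$ and conjugating away the weight) is an equivalent formulation resting on the same decomposition and the same key input $\delta_1\notin\Delta$.
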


\begin{proof} 
Let $c\in \R$ be a regular value of $f: \tilde X \to \R$ then $Y = f^{-1} (c)$ is a submanifold of $\tilde X$ separating it as $\tilde X = \tilde X_-\,\cup\,\tilde X_+$. Write $M = Z\,\cup\,\tilde X_+$ for some smooth compact manifold $Z$ with boundary $Y$. An application of the exicision principle to these two splittings yields 
\[
\ind_{\delta_1} (M)\,+\,\ind_{\delta_1\delta_2} (\tilde X)\,=\,\ind_{\delta_2} (M)\,+\,\ind_{\delta_1\delta_1} (\tilde X).
\]
Note that the complex $\Omega^*_{\delta_1\delta_1}(\tilde X)$ is exact because the complexes \eqref{E:twist} obtained from it by the Fourier--Laplace transform are exact for all $z$ with $|z| = e^{\delta_1}$. Therefore, $\ind_{\delta_1\delta_1} (\tilde X) = 0$ and the proof is complete.
\end{proof}

%%%%%%%%%%%%%%%%%%%%%%%%%%%%%%%%%%%%%%%%%%%%%%%%%

\subsection{Computing cohomology of $\Omega^*_{\delta_1\delta_2}(\tilde X$)} 
We will proceed by several reductions, the first being from weighted forms to weighted cellular cochains. To be precise, fix a finite cell complex structure on $X$ and lift it to $\tilde X$. Also, introduce the Hilbert space $\ell^2_{\delta_1\delta_2}$ of the sequences $\{x_k\;|\; k\in \mathbb Z\}$ of complex numbers such that 
\smallskip\[
\sum_{k < 0}\; e^{2\delta_1 k}\,|x_k|^2\; < \;\infty\qquad\text{and}
\qquad \sum_{k > 0}\; e^{2\delta_2 k}\,|x_k|^2\; < \;\infty.
\]

\smallskip\noindent
Theorem 2.17 of Miller \cite{miller}, which is a weighted version of the $L^2$ de Rham theorem, see \cite{dodziuk,lueck:L2}, establishes an isomorphism between the cohomology of $\Omega^*_{\delta_1\delta_2} (\tilde X)$ and the cellular cohomology of $\tilde X$ with $\ell^2_{\delta_1\delta_2}$ coefficients.  Miller actually uses weighted {\em simplicial} cohomology, but this is readily seen to be isomorphic to the more standard and convenient cellular version. 

\begin{proposition}\label{P:shift}
View $\ell^2_{\delta_1\delta_2}$ as a $\C[t,t^{-1}]$ module with $t$ acting as the right shift operator, $t(x_k) = x_{k+1}$. Then for all but finitely many $\delta_1$ and $\delta_2$, the cohomology of $\tilde X$ with $\ell^2_{\delta_1\delta_2}$ coefficients equals the homology of the complex
\begin{equation}\label{E:hom-l2}
\Hom_{\,\C[t,t^{-1}]}\;(C_*(\tilde X,\C),\,\ell^2_{\delta_1\delta_2}).
\end{equation}
\end{proposition}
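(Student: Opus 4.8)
The plan is to prove the proposition in two stages: first identify the two cochain complexes algebraically, and then confront the analytic point concealed in the word ``equals.'' For the algebraic identification, recall that since $X$ is a finite cell complex and $\tilde X \to X$ is the infinite cyclic cover, each $k$-cell of $X$ has a preferred lift, and the lifts $e^{(k)}_1,\dots,e^{(k)}_{n_k}$ freely generate $C_k(\tilde X,\C)$ as a module over $\C[t,t^{-1}]$, with $t$ acting by $\tau_*$. Hence a $\C[t,t^{-1}]$-homomorphism $\psi\colon C_k(\tilde X,\C)\to\ell^2_{\delta_1\delta_2}$ is determined freely by the images $u_i:=\psi(e^{(k)}_i)$, giving
\[
\Hom_{\,\C[t,t^{-1}]}(C_k(\tilde X,\C),\ell^2_{\delta_1\delta_2})\;\cong\;(\ell^2_{\delta_1\delta_2})^{n_k}.
\]
An $\ell^2_{\delta_1\delta_2}$-cochain on $\tilde X$ is likewise an assignment to each $k$-cell $\tau^j e^{(k)}_i$ of a scalar whose orbit sequences $(\phi(\tau^j e^{(k)}_i))_{j\in\Z}$ lie in $\ell^2_{\delta_1\delta_2}$, so this space is also $(\ell^2_{\delta_1\delta_2})^{n_k}$. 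I would match the two by $u_i=(\phi(\tau^j e^{(k)}_i))_j$. Writing the boundary map $\partial\colon C_{k+1}\to C_k$ as a matrix $P_k(t)$ over $\C[t,t^{-1}]$, one checks that under this identification \emph{both} differentials — precomposition with $\partial$ on the $\Hom$ side, and the cellular coboundary dual to $\partial$ on the cochain side — become multiplication by $P_k(t)$ with $t$ acting as the shift. This is exactly where $\C[t,t^{-1}]$-linearity of $\partial$ is used. So the two complexes are isomorphic as complexes of Hilbert spaces through bounded invertible maps, once the shift and weight conventions are reconciled.

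The genuine content lies in the second stage. Miller's theorem identifies the de Rham cohomology of $\Omega^*_{\delta_1\delta_2}(\tilde X)$ with the cellular $\ell^2$ cohomology, but the object it produces on the analytic side is the \emph{reduced} cohomology $\ker d/\overline{\im d}$, whereas the ``homology of the complex $\Hom_{\,\C[t,t^{-1}]}(C_*(\tilde X,\C),\ell^2_{\delta_1\delta_2})$'' means the purely algebraic quotient $\ker d/\im d$. These coincide precisely when each differential has closed range. Thus the proposition reduces to the assertion that, for all but finitely many $(\delta_1,\delta_2)$, the differentials of the Hilbert complex $\bigl((\ell^2_{\delta_1\delta_2})^{n_*},\,P_*(t)\bigr)$ have closed range.

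To establish closed range I would pass to the Fourier--Laplace transform, which is the reason for introducing the $\C[t,t^{-1}]$-module structure in the first place. The $z$-transform realizes $\ell^2_{\delta_1\delta_2}$ as a space of functions with $L^2$ boundary behavior on the circles $|z|=e^{\delta_1}$ and $|z|=e^{\delta_2}$, sends the shift $t$ to multiplication by $z$, and turns the differential $P_k(t)$ into pointwise multiplication by the Laurent-polynomial matrix $P_k(z)$. Multiplication by such a symbol has closed range on $L^2$ of a circle exactly when the pointwise rank of the symbol does not drop along that circle; the rank of $P_k(z)$ is constant off the finitely many $z$ at which its maximal nonvanishing minors vanish, and those minors are Laurent polynomials whose zeros have only finitely many distinct moduli. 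Hence closed range can fail only for $\delta_1$ or $\delta_2$ in a finite set — the same exceptional weights at which the transformed complexes \eqref{E:twist} degenerate — and away from this set reduced and algebraic cohomology agree, proving the proposition. (Alternatively, for $\delta_1,\delta_2\in\R-\Delta$ the complex $\Omega^*_{\delta_1\delta_2}(\tilde X)$ is Fredholm, and closedness of range can be transported across the Stage-one isomorphism, provided Miller's equivalence is realized by bounded chain maps.)

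I expect the second stage to be the main obstacle. The algebraic identification of the first stage is routine, though one must take care that the topology and the shift conventions are matched so that the identification is bounded with bounded inverse. The delicate part is the spectral analysis of the shift-multiplication operator $P_k(t)$ and the verification that its range fails to be closed only on a finite set of weights; this is exactly what licenses replacing the reduced $\ell^2$-cohomology coming from Miller's theorem by the convenient algebraic homology of the $\Hom$ complex, which the subsequent index computation will manipulate over $\C[t,t^{-1}]$.
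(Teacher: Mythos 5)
Your reduction of the proposition to the closed-range property of the differentials in \eqref{E:hom-l2} is exactly the content of the paper's proof, and your Stage-one identification of the weighted cellular cochain complex with the $\Hom$ complex is fine (the paper takes it as read). The gap is in the closed-range argument itself. The criterion you invoke --- multiplication by a matrix symbol on $L^2$ of a circle has closed range iff the pointwise rank does not drop --- is a statement about normal multiplication operators on an $L^2(\mu)$ space, and $\ell^2_{\delta_1\delta_2}$ with $\delta_1\neq\delta_2$ is not such a space: under the $z$-transform it is a Hardy-type space attached to \emph{two} circles, and $t$ acts as a bilateral weighted shift. Indeed $t-\lambda$ is Fredholm of index $\pm 1$ on $\ell^2_{\delta_1\delta_2}$ when $e^{\delta_2}<|\lambda|<e^{\delta_1}$, which is impossible for a normal operator, so the essential-range argument behind your criterion does not apply here. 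In the matrix case you give no mechanism for passing from ``the maximal minors do not vanish on the two circles'' to closed range on this two-weight space; that implication is true, but it is not a one-line consequence of the circle picture. Your parenthetical fallback (transporting closed range from the Fredholmness of $\Omega^*_{\delta_1\delta_2}(\tilde X)$ through Miller's equivalence) would need a bounded chain-level homotopy equivalence rather than an isomorphism on cohomology, and risks circularity with the use the paper later makes of this proposition.

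The paper closes this gap with an algebraic reduction you did not deploy at this stage: since $\C[t,t^{-1}]$ is a principal ideal domain, each boundary matrix can be put in diagonal (Smith normal) form by base changes invertible over $\C[t,t^{-1}]$, which act as bounded invertible operators on the relevant powers of $\ell^2_{\delta_1\delta_2}$. This reduces the question to scalar operators that are products of factors $t-\lambda$, and each such factor is Fredholm on $\ell^2_{\delta_1\delta_2}$ precisely when $|\lambda|\notin\{e^{\delta_1},e^{\delta_2}\}$, by the standard spectral theory of bilateral weighted shifts (the paper cites Conway for this). Fredholm operators have closed range, compositions of Fredholm operators are Fredholm, and the finitely many roots $\lambda$ of the diagonal entries account for the finite exceptional set of weights. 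If you replace your pointwise-rank analysis by this diagonalization, your argument becomes complete and coincides with the paper's.
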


\begin{proof}
This will follow as soon as we show that the images of the boundary operators $\p$ in complex \eqref{E:hom-l2} are closed. These boundary operators
\[
\p: \left(\ell^2_{\delta_1 \delta_2}\right)^k \to \left(\ell^2_{\delta_1 \delta_2}\right)^{\ell}
\]
are matrices whose entries are Laurent polynomials in $t$. Since $\C[t,t^{-1}]$ is a principal ideal domain, each $\p$ will have a diagonal matrix in properly chosen bases. The statement now follows from the fact that the operator $t - \lambda: \ell^2_{\delta_1 \delta_2} \to \ell^2_{\delta_1 \delta_2}$ is Fredholm for all $\lambda$ with $|\lambda|$ different from $e^{\delta_1}$ and $e^{\delta_2}$; see for instance Conway \cite[Proposition 27.7 (c)]{conway}.
\end{proof}

The universal coefficient theorem now tells us that the $\ell^2_{\delta_1 \delta_2}$ cohomology of $\tilde X$ is isomorphic to 
\begin{equation}\label{E:coeff}
\Hom_{\,\C[t,t^{-1}]}\;(H_*(\tilde X;\C),\ell^2_{\delta_1 \delta_2})\,\oplus\,
\Ext_{\,\C[t,t^{-1}]}\;(H_*(\tilde X;\C),\ell^2_{\delta_1 \delta_2}).
\end{equation}
Recall from the proof of Theorem \ref{T:one} that $H_* (\tilde X;\C)$ is a torsion module \eqref{E:torsion} whose order ideal is spanned by the characteristic polynomial of $\tau_*: H_* (\tilde X;\C) \to H_* (\tilde X;\C)$. Therefore, our next step will be to compute \eqref{E:coeff}, one cyclic module at a time.

\begin{lemma}
Let $\lambda$ be a complex number such that $|\lambda|$ is different from $e^{\delta_1}$ and $e^{\delta_2}$. Then, for any cyclic module $V = \C[t,t^{-1}]/(t-\lambda)^m$, we have
\[
\Ext_{\,\C[t,t^{-1}]}\;(V,\ell^2_{\delta_1 \delta_2})\,=\,0.
\]
\end{lemma}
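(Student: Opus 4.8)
The plan is to read off $\Ext$ from a free resolution of the cyclic module and then turn the vanishing statement into a surjectivity statement for a single shift operator. Since $\C[t,t^{-1}]$ is a principal ideal domain and $V = \C[t,t^{-1}]/(t-\lambda)^m$, there is a length-one free resolution $0 \to \C[t,t^{-1}] \xrightarrow{\,(t-\lambda)^m\,} \C[t,t^{-1}] \to V \to 0$. Applying $\Hom_{\,\C[t,t^{-1}]}(-,\ell^2_{\delta_1 \delta_2})$ and using $\Hom_{\,\C[t,t^{-1}]}(\C[t,t^{-1}],\ell^2_{\delta_1 \delta_2}) = \ell^2_{\delta_1 \delta_2}$, I would identify $\Ext_{\,\C[t,t^{-1}]}(V,\ell^2_{\delta_1 \delta_2})$ with the cokernel of $(t-\lambda)^m$ acting on $\ell^2_{\delta_1 \delta_2}$. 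Because the images of powers are nested, $\im (t-\lambda)^m \subseteq \im(t-\lambda)$, the operator $(t-\lambda)^m$ is onto if and only if $t-\lambda$ is; so the lemma reduces to showing that $t-\lambda \colon \ell^2_{\delta_1 \delta_2} \to \ell^2_{\delta_1 \delta_2}$ is surjective whenever $|\lambda| \ne e^{\delta_1}, e^{\delta_2}$.

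For the surjectivity I would invoke the Fredholm property already used in the proof of Proposition \ref{P:shift}: for $|\lambda|$ different from $e^{\delta_1}$ and $e^{\delta_2}$ the operator $t-\lambda$ is Fredholm, so its image is closed and $\coker(t-\lambda)$ is finite dimensional and canonically isomorphic to $\ker(t^* - \bar\lambda)$, the adjoint being taken in the weighted inner product defining $\ell^2_{\delta_1 \delta_2}$. A short computation then shows that $t^*$ is itself a weighted backward shift, with multiplier $e^{2\delta_1}$ on the negative end and $e^{2\delta_2}$ on the positive end. In this way surjectivity of $t-\lambda$ becomes equivalent to the triviality of $\ker(t^*-\bar\lambda)$.

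Finally I would solve the eigenvalue equation $(t^*-\bar\lambda)y = 0$ explicitly. It is a first-order recurrence, so its solution is determined up to a scalar and is a two-sided geometric sequence, comparable to $(\bar\lambda\, e^{-2\delta_2})^{j}$ as $j\to+\infty$ and to $(\bar\lambda\, e^{-2\delta_1})^{j}$ as $j\to-\infty$. Testing this candidate against the two defining sums of $\ell^2_{\delta_1 \delta_2}$ reduces square-summability at $+\infty$ to the comparison of $|\lambda|$ with $e^{\delta_2}$ and at $-\infty$ to the comparison of $|\lambda|$ with $e^{\delta_1}$; the hypothesis $|\lambda| \ne e^{\delta_1}, e^{\delta_2}$ is exactly what makes both comparisons strict, so membership at each end is decided unambiguously and the candidate fails to lie in $\ell^2_{\delta_1 \delta_2}$, whence $\ker(t^*-\bar\lambda)=0$ and $t-\lambda$ is onto. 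I expect the main obstacle to be precisely this two-ended bookkeeping: one must track the geometric growth rate of the adjoint eigenvector against \emph{both} weights $e^{\delta_1}$ and $e^{\delta_2}$ simultaneously and confirm that the strict inequalities furnished by the hypothesis push it out of the space. As a cross-check one can instead produce a bounded right inverse for $t-\lambda$ by hand, summing the recurrence $(t-\lambda)x = y$ toward the decaying end (a discrete variation-of-parameters, or Green's function, construction); this exhibits an explicit preimage of every $y$ and reproves surjectivity without passing to the adjoint.
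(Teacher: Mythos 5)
Your reduction of $\Ext_{\,\C[t,t^{-1}]}(V,\ell^2_{\delta_1\delta_2})$ to the cokernel of $(t-\lambda)^m$ via the length-one free resolution, and then to surjectivity of the single operator $t-\lambda$, is exactly right and is also how the paper frames the lemma (``immediate from the definition of $\Ext$''). Where you diverge is in proving surjectivity: the paper observes that $t-\lambda$ is Fredholm (hence has closed range), checks directly that every finitely supported sequence lies in its image, and concludes by density of such sequences; you instead pass to the adjoint and aim to show $\ker(t^*-\bar\lambda)=0$. That route is legitimate, and your identification of $t^*$ as a weighted shift in the opposite direction with multipliers $e^{-2\delta_1}$ and $e^{-2\delta_2}$ on the two ends is correct.

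The gap is in your last step. Strictness of the two one-ended comparisons does not by itself expel the adjoint eigenvector from $\ell^2_{\delta_1\delta_2}$: carrying out the computation (with the shift convention that makes the next lemma of the paper come out right), the solution of $(t^*-\bar\lambda)y=0$ is square-summable at $+\infty$ exactly when $|\lambda|<e^{\delta_2}$ and at $-\infty$ exactly when $|\lambda|>e^{\delta_1}$. These two conditions are mutually exclusive only because $\delta_2<\delta_1$, the standing assumption of the following lemma and of the proof of Theorem \ref{T:two}, which is omitted from the present statement. If instead $\delta_1<\delta_2$ and $e^{\delta_1}<|\lambda|<e^{\delta_2}$, the eigenvector does lie in $\ell^2_{\delta_1\delta_2}$, the operator $t-\lambda$ has a one-dimensional cokernel, and the $\Ext$ group is nonzero --- so no argument can succeed using only $|\lambda|\neq e^{\delta_1},e^{\delta_2}$. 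You need to actually perform the ``two-ended bookkeeping'' you flag as the main obstacle and invoke the ordering of the weights. The same caveat applies to your variation-of-parameters cross-check: the particular solution terminating at one end is admissible only for $|\lambda|<e^{\delta_1}$ and the one terminating at the other end only for $|\lambda|>e^{\delta_2}$, and these two cases cover all admissible $\lambda$ precisely when $\delta_2\le\delta_1$.
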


\begin{proof}
We already know from the proof of Proposition \ref{P:shift} that the operator $t - \lambda: \ell^2_{\delta_1 \delta_2} \to \ell^2_{\delta_1 \delta_2}$ is Fredholm. In addition, one can easily check that all finite sequences belong to its image. Since such sequences are dense in $\ell^2_{\delta_1 \delta_2}$, the operator $t - \lambda$ is surjective, and so are the operators $(t - \lambda)^m$ for all $m$. The result is now immediate from the definition of $\Ext$.
\end{proof}

\begin{lemma}
Let $\lambda$ be a complex number such that $|\lambda|$ is different from $e^{\delta_1}$ and $e^{\delta_2}$. Assume that $\delta_2 < \delta_1$. Then, for any cyclic module $V = \C[t,t^{-1}]/(t-\lambda)^m$, the dimension of $\Hom_{\,\C[t,t^{-1}]}\;(V,\ell^2_{\delta_1 \delta_2})$ is $m$ if $e^{\delta_2} < |\lambda| < e^{\delta_1}$, and zero otherwise.
\end{lemma}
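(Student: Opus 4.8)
The plan is to first reduce the computation of $\Hom_{\,\C[t,t^{-1}]}(V,\ell^2_{\delta_1\delta_2})$ to the computation of a kernel. Since $V=\C[t,t^{-1}]/(t-\lambda)^m$ is cyclic, generated by the class of $1$ subject only to the relation $(t-\lambda)^m\cdot 1=0$, any $\C[t,t^{-1}]$-homomorphism $\phi\colon V\to\ell^2_{\delta_1\delta_2}$ is determined by the single vector $v=\phi(1)$, and the relation forces $(t-\lambda)^m v=0$; conversely every $v$ in the kernel defines such a $\phi$. This gives a $\C$-linear isomorphism $\Hom_{\,\C[t,t^{-1}]}(V,\ell^2_{\delta_1\delta_2})\cong\ker\big((t-\lambda)^m\colon\ell^2_{\delta_1\delta_2}\to\ell^2_{\delta_1\delta_2}\big)$, so it remains to show this kernel has dimension $m$ when $e^{\delta_2}<|\lambda|<e^{\delta_1}$ and is trivial otherwise.

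Next I would solve the recurrence $(t-\lambda)^m x=0$ at the level of arbitrary two-sided sequences, ignoring the $\ell^2$ condition. Writing the right shift as $t\,e_k=e_{k+1}$, equivalently $(tx)_k=x_{k-1}$, and substituting $x_k=\lambda^{-k}u_k$ conjugates $t-\lambda$ into $\lambda(S-1)$, where $S$ is again the shift $(Su)_k=u_{k-1}$. Thus $\ker(t-\lambda)^m$ corresponds to $\ker(S-1)^m$, and since $S-1$ is, up to sign, the backward difference operator, its $m$-th power annihilates exactly the sequences $u_k$ that are polynomials in $k$ of degree less than $m$. This identifies the formal solution space as the $m$-dimensional space $\{x_k=q(k)\lambda^{-k}:\deg q<m\}$.

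The final step is to intersect this $m$-dimensional space with $\ell^2_{\delta_1\delta_2}$. For $x_k=q(k)\lambda^{-k}$ the two defining sums become $\sum_{k<0}|q(k)|^2(e^{\delta_1}/|\lambda|)^{2k}$ and $\sum_{k>0}|q(k)|^2(e^{\delta_2}/|\lambda|)^{2k}$. The polynomial factor $|q(k)|^2$ grows only polynomially and hence does not affect convergence, which is governed entirely by the geometric ratios: the negative sum converges precisely when $|\lambda|<e^{\delta_1}$ and the positive sum precisely when $|\lambda|>e^{\delta_2}$ (recall $|\lambda|$ is assumed different from $e^{\delta_1}$ and $e^{\delta_2}$, and $\delta_2<\delta_1$). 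Therefore, if $e^{\delta_2}<|\lambda|<e^{\delta_1}$ the entire $m$-dimensional space lies in $\ell^2_{\delta_1\delta_2}$, while if $|\lambda|$ lies outside this interval at least one sum diverges for every nonzero $q$, so only the zero sequence survives. This yields the claimed dimension count.

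I expect the main obstacle to be the second step: correctly triangularizing the recurrence $(t-\lambda)^m x=0$ to exhibit the full $m$-dimensional formal solution space of polynomial-times-geometric sequences, rather than just the single eigen-solution $\lambda^{-k}$, and keeping the shift convention straight so that the surviving weights come out as the interval $e^{\delta_2}<|\lambda|<e^{\delta_1}$ rather than its reciprocal. Once the solution space is correctly identified, the convergence analysis is routine, since polynomial growth is negligible against the geometric weights.
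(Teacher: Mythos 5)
Your proposal is correct and takes essentially the same route as the paper: the paper likewise identifies $\Hom_{\,\C[t,t^{-1}]}(V,\ell^2_{\delta_1\delta_2})$ with $\ker\bigl((t-\lambda)^m\colon\ell^2_{\delta_1\delta_2}\to\ell^2_{\delta_1\delta_2}\bigr)$ and then dismisses the rest as ``a straightforward exercise with infinite series,'' which is precisely the exercise you carry out in your second and third steps. Your shift convention $(tx)_k=x_{k-1}$ is the one consistent with the paper's phrase ``right shift'' and is indeed the one that produces the stated interval $e^{\delta_2}<|\lambda|<e^{\delta_1}$, so your worry about keeping the convention straight was well placed and correctly resolved.
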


\begin{proof}
For such a module, $\Hom_{\,\C[t,t^{-1}]}\,(V,\ell^2_{\delta_1 \delta_2})$ equals the kernel of the operator $(t - \lambda)^m: \ell^2_{\delta_1 \delta_2} \to \ell^2_{\delta_1 \delta_2}$. Computing this kernel is a straightforward exercise with infinite series.
\end{proof}

%%%%%%%%%%%%%%%%%%%%%%%%%%%%%%%%%%%%%%%%%%%%%%%%%%%

\subsection{Proof of Theorem \ref{T:two}}
Let $\lambda$ be a root of the product polynomial $A_0(t)\cdots A_{n-1}(t)$ of multiplicity $m = m_0 + \ldots + m_{n-1}$, where $m_k$ is the multiplicity of $\lambda$ as a root of $A_k(t)$. Choose generic $\delta_1$ and $\delta_2$ so that $e^{\delta_2} < |\lambda| < e^{\delta_1}$ and there are no other roots of $A_0(t)\cdots A_{n-1}(t)$ whose absolute values fit in this interval. It follows from Proposition \ref{P:change} and the cohomology calculation in the previous section that
\[
\ind_{\delta_1} (M)\;=\;\ind_{\delta_2} (M)\;-\;\sum\;(-1)^k\,m_k,
\]
which is exactly the formula claimed in Theorem \ref{T:two}.

%%%%%%%%%%%%%%%%%%%%%%%%%%%%%%%%%%%%%%%%%%%%%%%%%%

\section{Discussion and examples}
Let $M$ be a smooth Riemannian manifold of dimension $n$ with a periodic end modeled on $\tilde X$ and suppose that $H^*(\tilde X;\C)$ is finite dimensional. Then, for any $\delta \in \R - \Delta$, the de Rham complex $\Omega^*_{\delta} (M)$ is Fredholm and its index is given by the formula
\[
\ind_{\delta} (M)\;=\; (-1)^n\,\chi(M)\; + \;\sum\;(-1)^k\;\# \{\lambda\,|\, A_k(\lambda)=0,\;|\lambda| > e^{\delta}\,\},
\]
where the roots $\lambda$ of $A_k (t)$ are counted with their multiplicities. This formula is obtained by combining Theorem \ref{T:two} with Miller's theorem \cite{miller} that $\ind_{\delta} (M) = (-1)^n\,\chi (M)$ for sufficiently large $\delta > 0$. Miller \cite{miller} also shows that the function $\ind_{\delta} (M)$ is even or odd depending on whether $n$ is even or odd. This is consistent with the above formula because of Blanchfield duality which says that $A_k(\lambda) = 0$ if and only if $A_{n-k-1} (1/\lambda) = 0$ with matching multiplicities.

\begin{example}
A manifold with product end is a smooth Riemannian manifold whose end is modeled on $\tilde X = \mathbb R \times Y$, where $Y$ is a closed Riemannian manifold. The metric on $\R \times Y$ is presumed to be the product metric. The index theory on such manifolds has been studied by Atiyah, Patodi and Singer \cite{aps:I}. The covering translation induces an identity map $\tau_*$ on the homology of $\R \times Y$. Since $\lambda = 1$ is the only root of the characterictic polynomial of $\tau_*$ the complex $\Omega^*_{\delta}\,(M)$ is Fredholm for all $\delta \ne 0$. Its index $\ind_{\delta} (M)$ equals $\chi (M)$ if the dimension of $M$ is even, and $\sign(-\delta)\cdot \chi(M)$ if the dimension of $M$ is odd. Note that the same is true for any manifold whose periodic end is modeled on $\tilde X$ such that the characteristic polynomial of $\tau_*: H_* (\tilde X;\C) \to H_* (\tilde X;\C)$ only has unitary roots.
\end{example}

\begin{example}
This example originates in Fox's `Quick Trip'~\cite[Example 11]{fox:trip}. Fox constructs a 2-knot in the 4-sphere with with the property that the infinite cyclic cover of its exterior has first homology isomorphic to the additive group of dyadic rationals. A nice plumbing construction of this knot described in Rolfsen's book \cite[Section 7.F]{rolfsen:knots} shows that it has a Seifert surface diffeomorphic to $S^1 \times S^2 - D^3$. Perform a surgery on the knot so that the Seifert surface is capped off by the core 3-disk of the surgery. The resulting manifold $X$ has integral homology of $S^1 \times S^3$. It follows from a calculation in \cite[Section 7.F]{rolfsen:knots} that the characteristic polynomials $A_k (t)$ of the covering translation $\tau_*: H_k (\tilde X;\C) \to H_k (\tilde X;\C)$ are as follows: $A_0 (t) = t - 1$, $A_1 (t) = t - 2$, $A_2 (t) = t - 1/2$, and $A_3 (t) = t - 1$. Cut $\tilde X$ along a copy of $S^1 \times S^2$ and fill it in by $D^2 \times S^2$ to obtain an end-periodic manifold $M$. A straightforward calculation shows that $\chi (M) = 2$. The complex $\Omega^*_{\delta}(M)$ is Fredholm away from $\delta = 0$ and $\delta = \pm \ln 2$. Its index is equal to 1 if $0 < |\delta| < \ln 2$, and is equal to 2 otherwise.
\end{example}

\medskip

%%%%%%%%%%%%%%%%%%%%%%%%%%%%%%%%%%%%%%%%%%%%%%%%

\end{document}